\documentclass[12pt]{amsart}
\usepackage{graphicx}
\usepackage{amsfonts, amssymb, amsmath, amsthm, amscd}
\usepackage[dvips]{epsfig}
\usepackage{bm}
\usepackage{url}
\usepackage[all]{xy}
\usepackage[dvips]{epsfig}
%\usepackage{enumitem}
%usepackage[olditem, oldenum]{paralist}
%\usepackage{showkeys}

\usepackage[unicode, bookmarksnumbered=true, backref=true, dvips]{hyperref}
\usepackage{color}

\def\R{\mathbb{R}}

\def\Z{\mathbb{Z}}

\def\T{\mathbb{T}}
\def\AA{\mathcal{A}}
\def\BB{\mathcal{B}}
\def\GG{\mathcal{G}}
\def\HH{\mathcal{H}}

\def\SS{\mathfrak{S}}

\def\x{\mathbf{x}}

\newcommand{\CF} {\widehat{\operatorname{CF}}}
\newcommand{\HF} {\widehat{\operatorname{HF}}}

\newcommand{\abs}[1] {\lvert #1 \rvert}
\newcommand{\gen}[1] {\left\langle #1 \right\rangle}

\def\minus {\smallsetminus}
\def\Th {{}^{\text{th}}}

\newtheorem{theorem}{Theorem}
\newtheorem{lemma}{Lemma}

\newtheorem{conj}{Conjecture}

 \DeclareMathOperator{\sign}{sign}
\DeclareMathOperator{\Sym}{Sym} 
\DeclareMathOperator{\gr}{gr} \DeclareMathOperator{\rank}{rank}

\title{Strong L-spaces and left-orderability}

\author{Adam Simon Levine}
\address{Department of Mathematics, Brandeis University, Waltham, MA 02453}
\email{levinea@brandeis.edu}

\author{Sam Lewallen}
\address{Department of Mathematics,
Princeton University, Princeton, NJ 08544}
\email{lewallen@math.princeton.edu}

\thanks{The first author was supported by an NSF Postdoctoral Fellowship.}

\begin{document}

\begin{abstract}
We introduce the notion of a strong L-space, a closed, oriented rational
homology 3-sphere whose Heegaard Floer homology can be determined at the chain
level. We prove that the fundamental group of a strong L-space is not
left-orderable. Examples of strong L-spaces include the double branched covers
of alternating links in $S^3$.
\end{abstract}

\maketitle

\section{Introduction}

Heegaard Floer homology, developed by Ozsv\'ath and Szab\'o \cite{OSz3Manifold}
in the early 2000s, has been an extremely effective tool for answering
classical questions about 3-manifolds, particularly concerning the genera of
embedded surfaces \cite{OSzGenus}. However, surprisingly little is known about
the relationship between Heegaard Floer homology and topological properties of
Heegaard splittings, even though a Heegaard diagram is an essential ingredient
in defining the Heegaard Floer homology of a closed $3$-manifold $Y$. In
particular, a Heegaard diagram provides a presentation of the fundamental group
of $Y$, and it is natural to ask how this presentation is related to the
Heegaard Floer chain complex. In this paper, we shall investigate one such
connection.

A \emph{left-ordering} on a non-trivial group $G$ is a total order $<$ on the
elements of $G$ such that $g < h$ implies $kg < kh$ for any $g,h,k \in G$. A
group $G$ is called \emph{left-orderable} if it is nontrivial and admits at
least one left-ordering. The question of which $3$-manifolds have
left-orderable fundamental group has been of considerable interest and is
closely connected to the study of foliations. For instance, if $Y$ admits an
$\R$-covered foliation (i.e., a taut foliation such that the leaf-space of the
induced foliation on the universal cover $\widetilde{Y}$ is homeomorphic to
$\R$), then $\pi_1(Y)$ is left-orderable. Boyer, Rolfsen, and Wiest
\cite{BoyerRolfsenWiest} showed that the fundamental group of any irreducible
$3$-manifold $Y$ with $b_1(Y)>0$ is left-orderable, reducing the question to
that of rational homology spheres.

In its simplest form, Heegaard Floer homology associates to a closed, oriented
$3$-manifold $Y$ a $\Z/2\Z$--graded, finitely generated abelian group $\HF(Y)$.
This group is computed as the homology of a free chain complex $\CF(\HH)$
associated to a Heegaard diagram $\HH$ for $Y$; different choices of diagrams
for the same manifold yield chain-homotopy-equivalent complexes. The group
$\CF(\HH)$ depends only on the combinatorics of $\HH$, but the differential on
$\CF(\HH)$ involves counts of holomorphic curves that rely on auxiliary choices
of analytic data. If $Y$ is a rational homology sphere, then the Euler
characteristic of $\HF(Y)$ is equal to $\abs{H_1(Y;\Z)}$, which implies that
the rank of $\HF(Y)$ is at least $\abs{H_1(Y;\Z)}$. $Y$ is called an
\emph{L-space} if $\HF(Y) \cong \Z^{\abs{H_1(Y;\Z)}}$; thus, L-spaces have the
simplest possible Heegaard Floer homology. Examples of L-spaces include $S^3$,
lens spaces (whence the name), all manifolds with finite fundamental group, and
double branched covers of alternating (or, more broadly,
\emph{quasi-alternating}) links. Additionally, Ozsv\'ath and Szab\'o
\cite{OSzGenus} showed that if $Y$ is an L-space, it does not admit any taut
foliation; whether the converse is true is an open question.

The following related conjecture, stated formally by Boyer, Gordon, and Watson
\cite{BoyerGordonWatson}, has recently been the subject of considerable
attention:
\begin{conj} \label{conj:Lspace}
Let $Y$ be a closed, connected, 3-manifold. Then $\pi_{1}(Y)$ is not
left-orderable if and only if $Y$ is an L-space.
\end{conj}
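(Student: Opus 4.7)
The plan is to attack the two implications separately, since they involve very different kinds of arguments and only the first is presently within reach via established technology.

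For the forward direction, that $Y$ being an L-space implies $\pi_1(Y)$ is not left-orderable, I would begin from the theorem of Ozsv\'ath--Szab\'o that an L-space admits no taut foliation. The strategy is then to argue contrapositively: if $\pi_1(Y)$ were left-orderable, I would try to manufacture a taut foliation on $Y$. A left-ordering of $\pi_1(Y)$ is equivalent to a faithful, orientation-preserving action on $\R$, and the plan would be to combine such an action with the universal circle machinery of Calegari--Dunfield, or with a Thurston-style slithering construction, to produce a codimension-one foliation (ideally an $\R$-covered one, which would additionally give a leafwise transverse flow). One could attempt to handle the rational homology sphere case by first establishing the existence of a nontrivial $\pi_1(Y)$-equivariant map from $\widetilde{Y}$ to $\R$, then realizing its level sets, after smoothing, as leaves of a taut foliation.

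For the reverse direction, that $Y$ not being an L-space implies $\pi_1(Y)$ is left-orderable, I would aim to convert the largeness of $\HF(Y)$ into a geometric structure. Any excess free summands beyond the rank $\abs{H_1(Y;\Z)}$ should be interpreted as obstructions to a ``rigid'' picture and, optimistically, as witnesses of an essential lamination; by Gabai--Oertel, such laminations yield actions of $\pi_1(Y)$ on order trees, and one then hopes to promote this action to a genuine left-ordering via a limiting argument in the spirit of Boyer--Rolfsen--Wiest. A complementary tactic is to use bordered Heegaard Floer homology to decompose $Y$ along incompressible surfaces, extract from the type $D$ or $A$ modules at each piece a concrete transverse structure (such as a partial foliation), and assemble these into a global object whose holonomy provides the ordering.

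The main obstacle, and the reason the conjecture remains open, is that Heegaard Floer homology is defined via symplectic geometry and counts of holomorphic disks, while left-orderability is a purely algebraic property of the fundamental group, and the only known bridge between these worlds passes through taut foliations, where the implications run only one way (taut foliations obstruct being an L-space, and $\R$-covered foliations give left-orderings, but neither converse is established). The hardest single step is producing a taut foliation on $Y$ from a left-ordering of $\pi_1(Y)$, since no analogue of the universal circle is known to exist in this generality. A realistic attack would therefore likely proceed by establishing the conjecture first within large families where one side is independently accessible---Seifert fibered spaces, graph manifolds, and double branched covers of alternating or quasi-alternating links---and then seeking gluing theorems that extend the equivalence across JSJ tori, with the hope that a uniform geometric mechanism emerges from the accumulated examples.
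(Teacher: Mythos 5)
The statement you were asked about is labeled a \emph{conjecture} in the paper, and the paper does not prove it; it remains open. What the paper actually establishes is a special case of the ``if'' direction: if $Y$ admits a Heegaard diagram $\HH$ with $\CF(\HH) \cong \Z^{\abs{H_1(Y;\Z)}}$ already at the chain level (a ``strong L-space''), then $\pi_1(Y)$ is not left-orderable. The mechanism is entirely combinatorial and group-theoretic: a strong Heegaard diagram gives a balanced presentation of $\pi_1(Y)$ whose ``sign matrix'' has the property that every nonzero term of its formal determinant has the same sign and contains no ambiguous entries (because all intersection points of $\T_\alpha \cap \T_\beta$ have the same local sign when the Euler characteristic equals the rank). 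This forces, for any putative left-ordering, the existence of a relator that is a product of generators all of the same sign, a contradiction. Your proposal does not touch this argument and does not engage with the chain-level hypothesis at all.

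As a proof of the conjecture itself, your proposal has genuine gaps at every load-bearing step, several of which you acknowledge. For the forward direction, the chain ``left-orderable $\Rightarrow$ taut foliation $\Rightarrow$ not an L-space'' fails at the first arrow: there is no known construction of a taut foliation from a left-ordering of $\pi_1$, and the universal-circle and slithering machinery runs in the opposite direction (from foliations to group actions, not from actions to foliations). Realizing level sets of an equivariant map $\widetilde{Y} \to \R$ as leaves of a foliation requires the action on $\R$ to have controlled dynamics that an abstract left-ordering does not provide. For the reverse direction, there is no known way to extract an essential lamination, or any geometric structure, from the failure of $\HF(Y)$ to have minimal rank; the ``excess summands as witnesses of a lamination'' step is pure speculation, and even granting a lamination, the Gabai--Oertel order-tree action does not in general promote to a left-ordering. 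In short, the proposal is a survey of why the conjecture is hard rather than a proof, and it should not be presented as establishing the statement. If your goal is to prove something unconditional, the paper's route --- isolating a class of manifolds where the Heegaard Floer data is combinatorially rigid enough to constrain a group presentation directly --- is the kind of concrete, checkable step that is actually available.
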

This conjecture is now known to hold for all geometric, non-hyperbolic
3-manifolds \cite{BoyerGordonWatson}.\footnote{Specifically, work of Boyer,
Rolfsen, and Wiest \cite{BoyerRolfsenWiest} and Lisca and Stipsicz
\cite{LiscaStipsiczInvariants3} gives the result for Seifert manifolds with
base orbifold $S^2$, as was also observed by Peters \cite{PetersLSpaces}. The
cases of Seifert manifolds with non-orientable base orbifold and of Sol
manifolds follow from \cite{BoyerRolfsenWiest} and \cite{BoyerGordonWatson}.}
Additionally, Boyer, Gordon, Watson \cite{BoyerGordonWatson} and Greene
\cite{GreeneAlternating} have shown that the double branched cover of any
non-split alternating link in $S^3$
--- which is generically a hyperbolic $3$-manifold --- has non-left-orderable
fundamental group.

In this paper, we prove the ``if'' direction of Conjecture \ref{conj:Lspace}
for manifolds that are ``L-spaces on the chain level.'' To be precise, we call
a 3-manifold $Y$ a \emph{strong L-space} if it admits a Heegaard diagram $\HH$
such that $\CF(\HH) \cong \Z^{\abs{H_1(Y;\Z)}}$. This purely combinatorial
condition implies that the differential on $\CF(\HH)$ vanishes, without any
consideration of holomorphic disks. We call such a Heegaard diagram a
\emph{strong Heegaard diagram}. By considering the presentation for $\pi_1(Y)$
associated to a strong Heegaard diagram, we prove:
\begin{theorem} \label{thm:main}
If $Y$ is a strong L-space, then $\pi_1(Y)$ is not left-orderable.
\end{theorem}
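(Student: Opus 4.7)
The plan is to translate the strong L-space hypothesis into a combinatorial constraint on the Heegaard presentation of $\pi_1(Y)$---namely, that every relator is a sign-homogeneous word in the generators---and then argue that such a presentation is incompatible with any left-ordering. The first step is to extract the standard presentation: from a strong Heegaard diagram $\HH$ of genus $g$ one obtains $\pi_1(Y) = \langle a_1, \ldots, a_g \mid r_1, \ldots, r_g \rangle$, in which $a_i$ corresponds to $\alpha_i$ and $r_j$ is the cyclic word in $a_1^{\pm 1}, \ldots, a_g^{\pm 1}$ read off by traversing $\beta_j$ and recording $a_i^{\pm 1}$ at each intersection with $\alpha_i$, with the sign dictated by the sign of the intersection point. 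The abelianization of this presentation is presented by the signed intersection matrix $M$ with $M_{ij} = \alpha_i \cdot \beta_j$, so $\abs{H_1(Y;\Z)} = \abs{\det M}$; on the other hand, generators of $\CF(\HH)$ correspond to perfect matchings between the $\alpha$- and $\beta$-circles through their intersection points, giving $\rank \CF(\HH) = \operatorname{perm}(A)$ where $A_{ij} = \abs{\alpha_i \cap \beta_j}$. The strong L-space hypothesis therefore forces $\operatorname{perm}(A) = \abs{\det M}$.

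Next, I would establish a sign lemma by analyzing the equality case of
\[
\abs{\det M} \;\le\; \sum_{\sigma} \prod_i \abs{M_{i\sigma(i)}} \;\le\; \sum_{\sigma} \prod_i A_{i\sigma(i)} \;=\; \operatorname{perm}(A).
\]
Equality throughout imposes two constraints: (a) every pair $(i,j)$ occurring in some perfect matching of positive $A$-weight satisfies $\abs{M_{ij}} = A_{ij}$, so all intersections of $\alpha_i$ with $\beta_j$ carry a common sign $\epsilon_{ij} \in \{\pm 1\}$; and (b) the products $\sign(\sigma)\prod_i \epsilon_{i\sigma(i)}$ are constant in $\sigma$ over such matchings, yielding a coherence relation among the entries of the sign matrix $(\epsilon_{ij})$. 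Consequently each $r_j$ becomes a \emph{positive} word in the alphabet $\{a_i^{\epsilon_{ij}}\}_i$, meaning that $a_i$ never appears in $r_j$ with exponent $-\epsilon_{ij}$.

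Suppose now for contradiction that $\pi_1(Y)$ admits a left-ordering $<$. After reducing the presentation we may assume every $a_i \neq 1$, and we let $\eta_i \in \{\pm 1\}$ indicate whether $a_i > 1$ or $a_i < 1$. The goal is to produce an index $j$ for which $\eta_i \epsilon_{ij}$ takes a single common value as $i$ ranges over the indices with $\epsilon_{ij} \neq 0$; then every letter of $r_j$ lies strictly on one side of the identity, so $r_j$ is a nontrivial product of elements all $>1$ (or all $<1$), contradicting $r_j = 1$.

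The crux is this last existence claim. A naive pigeonhole of sign patterns against the $g$ available relators already fails once $g \ge 3$, so the argument cannot be purely sign-theoretic. I expect the resolution to exploit the coherence statement (b), possibly combined with a reduction of $\HH$ by handleslides preserving strongness, to put the sign matrix $(\epsilon_{ij})$ into a canonical form rich enough that \emph{some} column is aligned with any given $\eta$. Converting this combinatorial rigidity into an honest ordering contradiction is the most delicate step in the program.
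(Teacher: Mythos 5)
Your setup matches the paper's: you extract the Heegaard presentation, observe that the strong L-space condition forces equality in $\abs{\det M} \le \operatorname{perm}(A)$, and correctly deduce the two consequences (a) each pair $(i,j)$ appearing in a matching has all its intersection points of one sign $\epsilon_{ij}$, and (b) the quantities $\sign(\sigma)\prod_i\epsilon_{i\sigma(i)}$ agree over all matchings. These are precisely conditions (2) and (3) of the paper's key technical lemma (Lemma \ref{lemma:matrix}). But the step you flag as ``the crux''---that for any sign assignment $\eta$ to the generators some relator column is $\eta$-aligned---is exactly the content of that lemma, and you do not prove it; you only conjecture that coherence plus perhaps some handleslide normalization should yield it. As you yourself note, pigeonhole fails, so without this step the proof is genuinely incomplete. (A smaller issue: ``after reducing the presentation we may assume every $a_i \ne 1$'' is not justified and is unnecessary---one should instead allow $d_i \in \{0,+,-\}$ and ask for a nonzero column whose \emph{nonzero} entries are sign-homogeneous, as the paper does.)

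The missing argument requires no normalization of the sign matrix and uses only conditions (a) and (b) directly. Reorder so that a fixed matching sits on the diagonal, and scale row $i$ by $d_i$ (your $\eta_i$, extended by $0$). Suppose for contradiction that every nonzero column of the scaled matrix contains a nonzero off-diagonal entry whose sign differs from that column's diagonal entry. Starting from any index $i_1$ with nonzero diagonal, repeatedly pass from the diagonal entry in column $i_j$ to such an opposite-sign entry in row $i_{j+1}$, then to the diagonal entry of row $i_{j+1}$ (which is nonzero, else that row would vanish). This ``connect the dots'' process must close up into a cycle $i_1,\dots,i_k$. The associated $k$-cycle $\sigma$ gives a nonzero term of the formal determinant: the permutation sign contributes $(-1)^{k-1}$ relative to the identity, while the $k$ changed entries each flip sign, contributing $(-1)^k$; the net factor is $-1$, contradicting the coherence condition (b). This is the entire content of the delicate step, and it is a purely combinatorial argument about the sign matrix---no handleslides or canonical forms are needed.
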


The standard Heegaard diagram for a lens space is easily seen to be a strong
diagram. Moreover, Greene \cite{GreeneSpanning} constructed a strong Heegaard
diagram for the double branched cover of any alternating link in $S^3$; indeed,
Boyer, Gordon, and Watson's proof that the fundamental group of such a manifold
is not left-orderable essentially makes use of the group presentation for
$\pi_1$ associated to that Heegaard diagram. At present, we do not know of any
strong L-space that cannot be realized as the double branched cover of an
alternating link; while it seems unlikely that every strong L-space can be
realized in this manner, it is unclear what obstructions could be used to prove
this claim. (Indeed, the question of finding an alternate characterization of
alternating links is a famous open problem posed by R. H. Fox.) Nevertheless,
our theorem seems like a useful step in the direction of Conjecture
\ref{conj:Lspace} in that it relies only on data contained in the Heegaard
Floer chain complex.

On the other hand, the following theorem, which is well-known but does not
appear in the literature, does indicate that being a strong L-space may be a
fairly restrictive condition:
\begin{theorem} \label{thm:S3}
If $Y$ is an integer homology sphere that is a strong L-space, then $Y \cong
S^3$.
\end{theorem}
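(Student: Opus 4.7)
The plan is to use the strong Heegaard diagram to force $\pi_1(Y)$ to be trivial, after which $Y \cong S^3$ follows from the Poincar\'e conjecture. Let $\HH$ be a strong Heegaard diagram for $Y$ of genus $g$; since $\abs{H_1(Y;\Z)} = 1$, the complex $\CF(\HH)$ has a single free generator. I would first write down the $g \times g$ matrix $A$ with $A_{ij} = \abs{\alpha_i \cap \beta_j}$. Generators of $\CF(\HH)$ are indexed by $g$-tuples of intersection points defining a perfect matching between the $\alpha$- and $\beta$-curves, so their number equals the permanent $\operatorname{perm}(A) = \sum_{\sigma \in \SS_g} \prod_i A_{i\sigma(i)} = 1$. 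Exactly one permutation $\sigma$ contributes, and the contributing product equals $1$, forcing $A_{i\sigma(i)} = 1$ for every $i$. After relabeling the $\beta$-curves I may assume $\sigma = \mathrm{id}$, so that $A_{ii} = 1$ for all $i$.

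Next, I would put $A$ in upper triangular form. The vanishing of $\prod_i A_{i\tau(i)}$ for every non-identity $\tau$ says that the directed graph on $\{1,\dots,g\}$ with an edge $i \to j$ whenever $i \neq j$ and $A_{ij} > 0$ is acyclic, since any directed cycle would yield a cyclic permutation with a positive contribution. Topologically sorting this DAG and applying the resulting permutation simultaneously to the $\alpha$- and $\beta$-indexing makes $A$ upper triangular with $1$s along the diagonal.

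Now I would read off the standard presentation $\pi_1(Y) = \gen{x_1, \dots, x_g \mid r_1, \dots, r_g}$ coming from $\HH$, in which $x_j$ is a meridian of $\beta_j$ and $r_i$ is the word obtained by traversing $\alpha_i$ and recording each $\beta$-crossing as a letter $x_j^{\pm 1}$; the multiplicity of $x_j$ in $r_i$ equals $A_{ij}$. Upper triangularity says $x_j$ occurs in $r_i$ only when $i \leq j$. In particular $x_1$ appears only in $r_1$, and only once there with exponent $\pm 1$, so $r_1$ solves for $x_1$ as a word in $x_2, \dots, x_g$; a single Tietze move removes both $x_1$ and $r_1$. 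After this reduction, $x_2$ appears only in $r_2$ (and exactly once), and the elimination continues by induction until no generators remain. Hence $\pi_1(Y) = 1$, and by Perelman's proof of the Poincar\'e conjecture $Y \cong S^3$.

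The main difficulty is the first step: converting the combinatorial condition $\operatorname{perm}(A) = 1$ into the monotone structure that makes $A$ upper-triangularizable. Once this structural statement is in hand, the Tietze reduction is mechanical and the invocation of Poincar\'e finishes the argument.
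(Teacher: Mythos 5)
Your argument is correct, and its combinatorial core is essentially the same fact the paper exploits, but the two proofs diverge sharply in how they finish. The observation that $\operatorname{perm}(A)=1$ forces, after relabeling, a unit upper-triangular intersection matrix is a repackaging of the paper's graph-theoretic step: the paper works with the bipartite intersection graph, shows that a unique perfect matching together with the absence of leaves would produce a directed cycle and hence a second matching, and uses the resulting leaf to destabilize a minimal-genus strong diagram; your acyclicity-plus-topological-sort argument encodes the same dichotomy (the last row of your triangular matrix is exactly a leaf). Where you genuinely part ways is the endgame. The paper never touches $\pi_1$: it destabilizes all the way down to a genus-one diagram with a single intersection point, which is the standard Heegaard diagram of $S^3$, so the identification of $Y$ is elementary and self-contained. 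You instead run Tietze eliminations on the presentation (using that $x_1$ occurs exactly once, and only in $r_1$) to conclude $\pi_1(Y)=1$, and then must invoke Perelman's proof of the Poincar\'e conjecture. Both routes are valid; the paper's buys independence from the geometrization machinery (which matters for a statement this combinatorial in flavor), while yours avoids the minimal-genus/handleslide bookkeeping and makes the triangular structure of the intersection matrix explicit, which is arguably cleaner to state. If you want to keep your argument but drop the reliance on Poincar\'e, note that your triangular form hands you a $1$-valent vertex, at which point the paper's destabilization lemma lets you induct on genus directly.
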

In particular, there exist integer homology spheres that are L-spaces (e.g.,
the Poincar\'e homology sphere) but not strong L-spaces. The fact that the
condition of being a strong L-space detects $S^3$ suggests that it might be
possible to obtain a more explicit characterization or even a complete
classification of strong L-spaces. Below, we shall present a graph-theoretic
proof of Theorem \ref{thm:S3} due to Josh Greene. In fact, this proof can be
extended to classify the finitely many strong L-spaces with $|H_1(Y;\Z)|\leq
3$, and it is natural to ask whether, for any $n$, there are finitely many
strong L-spaces with $|H_1(Y;\Z)| \leq n$.

\subsection*{Acknowledgments} The authors are grateful to Josh
Greene, Eli Grigsby, Peter Ozsv\'ath, and Liam Watson for helpful
conversations, and to the Simons Center for Geometry and Physics, where much of
the work in this paper was completed while the authors were visiting in May
2011.

\section{Proofs of Theorem \ref{thm:main} and \ref{thm:S3}}

To prove Theorem \ref{thm:main}, we will use a simple obstruction to
left-orderability that can be applied to group presentations.

Let $X$ denote the set of symbols $\{0,+,-,*\}$. These symbols are meant to
represent the possible signs of real numbers: $+$ and $-$ represent positive
and negative numbers, respectively, and $*$ represents a number whose sign is
not known. As such, we define a commutative, associative multiplication
operation on $X$ by the following rules: (1) $0 \cdot \epsilon = \epsilon \cdot
0 = 0$ for any $\epsilon \in X$; (2) $+ \cdot + = - \cdot - = +$; (3) $+ \cdot
- = - \cdot + = -$; and (4) $\epsilon \cdot
* =
* \cdot \epsilon =
*$ for $\epsilon \in \{+,-,*\}$.

A group presentation $\GG = \gen{ x_1,\dots, x_m | r_1, \dots, r_n}$ gives rise
to an $m \times n$ matrix $E(\GG) = (\epsilon_{i,j})$ with entries in $X$ by
the following rule:
\begin{equation} \label{eq:epsilonij}
\epsilon_{i,j} = \begin{cases}
 0 & \text{if neither $x_i$ nor $x_i^{-1}$ occur in $r_j$} \\
 + & \text{if $x_i$ appears in $r_j$ but $x_i^{-1}$ does not} \\
 - & \text{if $x_i^{-1}$ appears in $r_j$ but $x_i$ does not} \\
 * & \text{if both $x_i$ and $x_i^{-1}$ occur in $r_j$}. \\
\end{cases}
\end{equation}

\begin{lemma} \label{lemma:notLO}
Let $\GG = \gen{ x_1,\dots, x_m | r_1, \dots, r_n}$ be a group presentation
such that for any $d_1, \dots, d_m \in \{0,+,-\}$, not all zero, the matrix $M$
obtained from $E(\GG)$ by multiplying the $i\Th$ row by $d_i$ has a nonzero
column whose nonzero entries are either all $+$ or all $-$. Then the group $G$
presented by $\GG$ is not left-orderable.
\end{lemma}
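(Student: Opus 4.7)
I will argue by contrapositive: assume $G$ is left-orderable with left-invariant total order $<$, and produce a sign-vector $(d_1,\dots,d_m)$ whose associated matrix $M$ contradicts the hypothesis. The natural choice is to set $d_i = 0$ if $x_i$ is the identity in $G$, $d_i = +$ if $x_i > 1$, and $d_i = -$ if $x_i < 1$. Since left-orderable groups are nontrivial by definition and the $x_i$ generate $G$, at least one $d_i$ must be nonzero, so this sign-vector is an admissible input to the hypothesis.

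The key observation is that the multiplication rules on $X$ are arranged so that the entry $d_i \cdot \epsilon_{i,j}$ of $M$ records the sign (with respect to $<$) of each occurrence of $x_i^{\pm 1}$ in the word $r_j$. Concretely, the entry equals $+$ precisely when either $x_i > 1$ and only $x_i$ occurs in $r_j$, or $x_i < 1$ and only $x_i^{-1}$ occurs in $r_j$; in both subcases every such letter evaluates to an element $> 1$ in $G$. Symmetrically, an entry of $-$ means every such letter is $< 1$, and an entry of $0$ means the letter is trivial in $G$ (either because $x_i = 1$ or because $x_i^{\pm 1}$ does not appear in $r_j$) and may therefore be deleted from $r_j$ without changing its value.

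By hypothesis, there is a nonzero column $j$ of $M$ whose nonzero entries are all of one sign, say all $+$ (the case of all $-$ is symmetric). After deleting from $r_j$ every factor whose row index $i$ has $d_i = 0$, which does not change its image in $G$, we are left with a nonempty product of elements each strictly greater than $1$. Using left-invariance of $<$, an inductive argument ($ab > a\cdot 1 = a > 1$ whenever $a, b > 1$) shows that such a product is itself $> 1$. Hence $r_j > 1$ in $G$, contradicting the relation $r_j = 1$.

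The conceptual core of the argument is the sign calculus on $X$, and once that encoding is unpacked the proof is a direct application of left-invariance. The only subtlety I expect to verify carefully is the role of the symbol $*$: if column $j$ contains a $*$ entry at some row $i$ with $d_i \neq 0$, then that entry remains $*$ in $M$ and the column fails the ``all $+$ or all $-$'' condition, so the hypothesis forces the existence of a column for which no such mixing occurs and the positivity argument above applies without obstruction.
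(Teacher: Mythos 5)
Your proposal is correct and follows essentially the same argument as the paper: choose $d_i$ according to the sign of $x_i$ under the left-ordering, note that nontriviality of $G$ forces some $d_i \ne 0$, and derive $r_j > 1$ from a single-signed nonzero column, contradicting $r_j = 1$. The extra care you take with the role of $*$ and with the inductive positivity of products is a fine elaboration of the same proof.
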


\begin{proof}
Suppose that $<$ is a left-ordering on $G$, and let $d_i$ be $0$, $+$, or $-$
according to whether $x_i=1$, $x_i>1$, or $x_i<1$ in $G$. Since $G$ is
nontrivial, at least one of the $d_i$ is nonzero. If the $j\Th$ column of $M$
is nonzero and has entries in $\{0,+\}$, the relator $r_j$ is a product of
generators $x_i$ that are all nonnegative in $G$, and at least one of which is
strictly positive. Thus, $r_j>1$ in $G$, which contradicts the fact that $r_j$
is a relator. An analogous argument applies for a nonzero column with entries
in $\{0,-\}$.
\end{proof}

We shall focus on presentations with the same number of generators as
relations. For a permutation $\sigma \in S_n$, let $\sign(\sigma) \in \{+,-\}$
denote the sign of $\sigma$ ($+$ if $\sigma$ is even, $-$ if $\sigma$ is odd).
The key technical lemma is the following:

\begin{lemma} \label{lemma:matrix}
Let $\GG = \gen{ x_1,\dots, x_n | r_1, \dots, r_n}$ be a group presentation
such that $E(\GG)$ has the following properties:
\begin{enumerate}
\item There exists at least one permutation $\sigma_0 \in S_n$ such that the
entries $\epsilon_{1,\sigma_0(1)}, \dots, \epsilon_{n,\sigma_0(n)}$ are all
nonzero.
\item For any permutation $\sigma \in S_n$ such that
$\epsilon_{1,\sigma(1)}, \dots, \epsilon_{n,\sigma(n)}$ are all nonzero, we
have $\epsilon_{1,\sigma(1)}, \dots, \epsilon_{n,\sigma(n)} \in \{+,-\}$.
\item For any two permutations $\sigma, \sigma'$ as in (2), we have
\[
\sign(\sigma) \cdot \epsilon_{1,\sigma(1)} \cdot \dots \cdot
\epsilon_{n,\sigma(n)} = \sign(\sigma') \cdot \epsilon_{1,\sigma'(1)} \cdot
\dots \cdot \epsilon_{n,\sigma'(n)}.
\]
\end{enumerate}
Then the group $G$ presented by $\GG$ is not left-orderable.
\end{lemma}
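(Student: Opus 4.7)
The plan is to show that conditions (1)--(3) imply the hypothesis of Lemma \ref{lemma:notLO}, from which the conclusion is immediate. Concretely, I want to prove: for every nonzero $d = (d_1,\dots,d_n) \in \{0,+,-\}^n$, the scaled matrix $M = (d_i \epsilon_{i,j})$ has a nonzero column whose nonzero entries are all $+$ or all $-$. I proceed by induction on $n$, with $n=1$ immediate.

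For the inductive step, first normalize: by (1) and (2) there is a permutation $\sigma_0$ with every $\epsilon_{i,\sigma_0(i)} \in \{+,-\}$, and after permuting columns of $E(\GG)$ (reordering the relators) and negating appropriate columns (inverting the corresponding relators)---operations that preserve both $G$ and the existence of a column with the desired sign pattern---I may assume $\sigma_0 = \mathrm{id}$ and $\epsilon_{i,i} = +$ for every $i$, so that $\Delta = +$ in (3). If some $d_i$ vanishes, say $d_n = 0$, I delete row $n$ and column $n$ to obtain an $(n-1) \times (n-1)$ matrix $E'$ that still satisfies (1)--(3), since every matching of $E'$ extends by $n \mapsto n$ to a matching of $E$ with uniform extra factor $\epsilon_{n,n} = +$. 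The inductive hypothesis applied to $d' = (d_1,\dots,d_{n-1})$ produces a column $j \leq n-1$ of the required form, and the $n$-th entry of that column of $M$ is $0$, so the column remains of the required form in $M$.

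The main case is $d_i \neq 0$ for every $i$. Suppose, for contradiction, that every column of $M$ fails the desired condition. Since $(M)_{j,j} = d_j \in \{+,-\}$ sits in column $j$, there must be some $h(j) \neq j$ for which $\epsilon_{h(j),j}$ is either $*$ or $-d_{h(j)} d_j$. The map $h : \{1,\dots,n\} \to \{1,\dots,n\}$ is fixed-point-free, so its functional digraph contains a cycle $j_1 \mapsto j_2 \mapsto \dots \mapsto j_k \mapsto j_1$. Define $\tau \in S_n$ by $\tau(j_{t+1}) = j_t$ (indices mod $k$) and $\tau(i) = i$ off the cycle; this is a $k$-cycle, and every entry $\epsilon_{i,\tau(i)}$ is nonzero (off-cycle entries equal $+$, on-cycle entries lie in $\{+,-,*\}$). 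If any on-cycle entry is $*$, then $\tau$ is a permutation with all $\epsilon_{i,\tau(i)}$ nonzero but not all in $\{+,-\}$, contradicting (2). Otherwise every on-cycle entry equals $-d_{j_{t+1}} d_{j_t}$, and because each $d_{j_s}$ appears exactly twice in the cyclic product (so that $d_{j_s}^2 = +$ cancels out), one computes
\[
\sign(\tau) \prod_{i=1}^n \epsilon_{i,\tau(i)} = (-1)^{k-1} \cdot (-1)^k = -1,
\]
contradicting $\Delta = +$. Either way we reach a contradiction, so some column of $M$ has the required form, and Lemma \ref{lemma:notLO} concludes that $G$ is not left-orderable.

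I expect the main obstacle to be designing the function $h$ and the associated cyclic permutation $\tau$ so that both failure modes---a stray $*$ in a column and a genuine $+/-$ mismatch---funnel into contradictions with (2) and (3) respectively, with the sign arithmetic around the cycle coming out cleanly. The normalization step, the inductive handling of zero entries in $d$, and the extraction of a cycle from a fixed-point-free self-map are all, by comparison, routine.
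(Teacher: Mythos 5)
Your proof is correct and follows essentially the same strategy as the paper's: reduce to Lemma \ref{lemma:notLO}, extract from the per-column ``bad witnesses'' a fixed-point-free map whose orbit yields a $k$-cycle $\tau$ with all $\epsilon_{i,\tau(i)}$ nonzero, rule out $*$'s via condition (2), and derive the sign contradiction $(-1)^{k-1}(-1)^k=-1$ with condition (3). The only differences are organizational: you normalize the diagonal to $+$ and handle vanishing $d_i$'s by induction on $n$, whereas the paper skips both by building its cycle directly among the indices with $m_{i,i}\ne 0$ (its ``connecting the dots'' construction).
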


In other words, if we consider the formal determinant
\[
\det(E(\GG)) = \sum_{\sigma \in S_n} \sign(\sigma) \cdot \epsilon_{1,\sigma(1)}
\cdot \dots \cdot \epsilon_{n,\sigma(n)},
\]
condition (1) says that at least one summand is nonzero, condition (2) says
that no nonzero summand contains a $*$, and condition (3) says that every
nonzero summand has the same sign.

\begin{proof}
By reordering the generators and relations, it suffices to assume that
$\sigma_0$ from condition (1) is the identity, so that $\epsilon_{i,i} \ne 0$
for $i= 1, \dots, n$, and hence $\epsilon_{i,i} \in \{+,-\}$ by condition (2).
We shall show that $E(\GG)$ satisfies the hypotheses of Lemma
\ref{lemma:notLO}.

Suppose, then, toward a contradiction, that $d_1, \dots, d_n$ are elements of
$\{0,+,-\}$, not all zero, such that every nonzero column of the matrix $M$
obtained as in Lemma \ref{lemma:notLO} contains a nonzero off-diagonal entry
(perhaps a $*$) that is not equal to the diagonal entry in that column. Denote
the $(i,j)\Th$ entry of $M$ by $m_{i,j}$.

\begin{figure}
\[
\begin{pmatrix}
 + & 0 & 0 & 0 & \fbox{$-$} \\
 0 & + & \fbox{$-$} & * & 0 \\
 \fbox{$-$} & 0 & + & * & 0 \\
 0 & 0 & 0 & \fbox{$+$} & 0 \\
 + & \fbox{$-$} & 0 & 0 & + \\
\end{pmatrix}
\qquad \qquad
 \left(
 \vcenter{ \xymatrix@R=6pt@C=6pt{
 + \ar[dd]  &               &           &       & - \ar[llll] \\
            & + \ar[ddd]    & - \ar[l]  & {*}   &   \\
 - \ar[rr]  &               & + \ar[u]  & {*}   &   \\
            &               &           & +     &   \\
 +          & - \ar[rrr]    &           &       & + \ar[uuuu]
}} \right)
\]
\caption{Illustration of the proof of Lemma \ref{lemma:matrix}. In the matrix
$M$ shown at left, the entries $m_{i, \sigma(i)}$ are highlighted, where
$\sigma$ is the permutation constructed in the proof. To find $\sigma$, we
start with the $+$ in the upper left corner, travel to a $-$ in the same
column, and then travel to the diagonal entry in the same row as this $-$.
Repeating this procedure, we eventually obtain a closed loop, as shown at
right.} \label{fig:connect}
\end{figure}

We may inductively construct a sequence of distinct indices $i_1, \dots, i_k
\in \{1, \dots, n\}$ such that
\begin{enumerate}
\item[(A)] $m_{i_j,i_j} \in \{+,-\}$ for each $j=1, \dots, m$,
and
\item[(B)]$m_{i_{j+1},i_{j}} \ne 0$ and $m_{i_{j+1},i_j} \ne
m_{i_j,i_j}$
\end{enumerate}
for each $j=1, \dots, k$, taken modulo $k$. This is done by ``connecting the
dots'' as in Figure \ref{fig:connect}. Specifically, we begin by choosing any
$i_1$ such that $m_{i_1,i_1} \ne 0$. Given $i_j$, our assumption on $M$ states
that we can choose $i_{j+1}$ satisfying assumption (B) above; we then have
$m_{i_{j+1},i_{j+1}} \ne 0$ since otherwise the whole $i_{j+1}\Th$ row would
have to be zero. Repeating this procedure, we eventually obtain an index $i_k$
that is equal to some previously occurring index $i_{k'}$, where $k'+1 <k$. The
sequence $i_{k'+1}, \dots, i_k$, relabeled accordingly, then satisfies the
assumptions (A) and (B).

Define a $k$-cycle $\sigma\in S_n$ by $\sigma(i_j) = i_{j+1}$ for $j=1, \dots,
k$ mod $k$, and $\sigma(i') = i'$ for $i' \not\in \{i_1, \dots, i_k\}$. By
construction, $\epsilon_{i, \sigma(i)} \ne 0$ for each $i = 1, \dots, n$, so
the sequence $(\epsilon_{1, \sigma(1)}, \dots, \epsilon_{n,\sigma(n)})$
contains no $*$s by condition (2). The sequences $(\epsilon_{1, \sigma(1)},
\dots, \epsilon_{n,\sigma(n)})$ and $(\epsilon_{1,1}, \dots, \epsilon_{n,n})$
differ in exactly $k$ entries, and the signature of $\sigma$ is $(-1)^{k-1}$.
This implies that
\[
\sign(\sigma) \cdot \epsilon_{1, \sigma(1)} \cdot \dots \cdot
\epsilon_{n,\sigma(n)} = (-1)^{2k-1} \sign(\operatorname{id}) \cdot
\epsilon_{1, 1} \cdot \dots \cdot \epsilon_{n,n},
\]
which contradicts condition (3). This completes the proof.
\end{proof}

Now we will apply Lemma \ref{lemma:matrix} to prove Theorem \ref{thm:main}. We
first recall some basic facts about the Heegaard Floer chain complex. A
\emph{Heegaard diagram} is a tuple $\HH = (\Sigma, \bm\alpha, \bm\beta)$, where
$\Sigma$ is a closed, oriented surface of genus $g$, $\bm\alpha = (\alpha_1,
\dots, \alpha_g)$ and $\bm\beta = (\beta_1, \dots, \beta_g)$ are each
$g$-tuples of pairwise disjoint simple closed curves on $\Sigma$ that are
linearly independent in $H_1(\Sigma;\Z)$, and each pair of curves $\alpha_i$
and $\beta_j$ intersect transversely.  A Heegaard diagram $\HH$ determines a
closed, oriented $3$-manifold $Y = Y_\HH$ with a self-indexing Morse function
$f: Y \to [0,3]$ such that $\Sigma = f^{-1}(3/2)$, the $\alpha$ circles are the
belt circles of the $1$-handles of $Y$, and the $\beta$ circles are the
attaching circles of the $2$-handles. If we orient the $\alpha$ and $\beta$
circles, the Heegaard diagram determines a group presentation
\[
\pi_1(Y) = \gen{a_1, \dots, a_g \mid b_1, \dots, b_g},
\]
where the generators $a_1, \dots, a_g$ correspond to the $\alpha$ circles, and
$b_j$ is the word obtained as follows: If $p_1, \dots, p_k$ are the
intersection points of $\beta_j$ with the $\alpha$ curves, indexed according to
the order in which they occur as one traverses $\beta_i$, and $p_\ell \in
\alpha_{i_\ell} \cap \beta_i$ for $\ell = 1, \dots, k$, then
\begin{equation} \label{eq:relation}
b_j = \prod_{\ell = 1}^k a_{i_\ell}^{\eta(p_i)},
\end{equation}
where $\eta(p_i) \in \{\pm 1\}$ is the local intersection number of
$\alpha_{i_\ell}$ and $\beta_j$ at $p_i$.

Let $\Sym^g(\Sigma)$ denote the $g\Th$ symmetric product of $\Sigma$, and let
$\T_\alpha, \T_\beta \subset \Sym^g(\Sigma)$ be the $g$-dimensional tori
$\alpha_1 \times \dots \times \alpha_g$ and $\beta_1 \times \dots \times
\beta_g$, which intersect transversely in a finite number of points. Assuming
$Y$ is a rational homology sphere, $\CF(\HH)$ is the free abelian group
generated by points in $\SS_\HH = \T_\alpha \cap \T_\beta$.\footnote{For
general $3$-manifolds, we must restrict to a particular class of so-called
admissible diagrams.} More explicitly, these are tuples $\x = (x_1, \dots,
x_g)$, where $x_i \in \alpha_i \cap \beta_{\sigma(i)}$ for some permutation
$\sigma \in S_g$. The differential on $\CF(\HH)$ counts holomorphic Whitney
disks connecting points of $\SS_\HH$ (and depends on an additional choice of a
basepoint $z \in \Sigma$), but we do not need to describe this in any detail
here.

Orienting the $\alpha$ and $\beta$ circles determines orientations of
$\T_\alpha$ and $\T_\beta$. For $\x \in \SS_\HH$, let $\eta(\x)$ denote the
local intersection number of $\T_\alpha$ and $\T_\beta$ at $\x$. If $\x = (x_1,
\dots, x_g)$ with $x_i \in \alpha_i \cap \beta_{\sigma(i)}$, we have
\begin{equation} \label{eq:grading}
\eta(\x) = \sign(\sigma) \prod_{i=1}^g \eta(x_i).
\end{equation}
These orientations determine a $\Z/2$-valued grading $\gr$ on $\CF(Y)$ by the
rule that $(-1)^{\gr(\x)} = \eta(\x)$; the differential shifts this grading by
$1$. If $Y$ is a rational homology sphere, then with respect to this grading,
we have $\chi( \CF(\HH)) = \pm \abs{H_1(Y;\Z)}$, and we may choose the
orientations such that the sign is positive. (See \cite[Section
5]{OSzProperties} for further details.)

The proof of Theorem \ref{thm:main} is completed with the following:

\begin{lemma}
If $\HH$ is a strong Heegaard diagram for a strong L-space $Y$, then the
corresponding presentation for $\pi_1(Y)$ satisfies the hypotheses of Lemma
\ref{lemma:matrix}.
\end{lemma}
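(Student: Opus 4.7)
The plan is to unpack the combinatorial meaning of being a strong L-space at the chain level and verify, one by one, the three hypotheses of Lemma \ref{lemma:matrix} for the presentation of $\pi_1(Y)$ coming from $\HH$. The key observation is that, by definition, the number of generators $\abs{\SS_\HH}$ of $\CF(\HH)$ equals $\abs{H_1(Y;\Z)}$, but one also has $\abs{\chi(\CF(\HH))} = \abs{H_1(Y;\Z)}$. Since in general $\abs{\chi(\CF(\HH))} \leq \abs{\SS_\HH}$ with equality iff all $\eta(\x)$ share a common sign, after reorienting the $\alpha$ and $\beta$ circles appropriately I may assume $\eta(\x) = +1$ for every $\x \in \SS_\HH$. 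Combined with equation \eqref{eq:grading}, this single constraint drives the rest of the argument.

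Condition (1) of Lemma \ref{lemma:matrix} is automatic, since $\abs{H_1(Y;\Z)}\geq 1$ forces $\SS_\HH$ to be nonempty, and any $\x = (x_1,\dots,x_g) \in \SS_\HH$ with $x_i \in \alpha_i \cap \beta_{\sigma_0(i)}$ furnishes a permutation $\sigma_0$ with every $\epsilon_{i,\sigma_0(i)}$ nonzero. For condition (2), I would argue by contradiction: if $\sigma$ makes every $\epsilon_{i,\sigma(i)}$ nonzero but $\epsilon_{k,\sigma(k)} = {*}$ for some $k$, then $\alpha_k \cap \beta_{\sigma(k)}$ contains two points $p,p'$ of opposite local intersection sign. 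Using the bijection between $\SS_\HH$ and tuples $(\sigma,(x_1,\dots,x_g))$ with $x_i \in \alpha_i \cap \beta_{\sigma(i)}$, I build $\x,\x' \in \SS_\HH$ differing only in the $k$th coordinate (using $p$ versus $p'$); then \eqref{eq:grading} gives $\eta(\x) = -\eta(\x')$, contradicting $\eta(\x)=\eta(\x')=+1$. For condition (3), the argument just given shows that every point of $\alpha_i \cap \beta_{\sigma(i)}$ carries the single sign $\epsilon_{i,\sigma(i)} \in \{+,-\}$, so for any $\x$ corresponding to such a $\sigma$, \eqref{eq:grading} specializes to
\[
+1 \;=\; \eta(\x) \;=\; \sign(\sigma)\cdot \epsilon_{1,\sigma(1)}\cdots \epsilon_{g,\sigma(g)},
\]
which is exactly condition (3).

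The conceptual heart is the reduction of the strong L-space hypothesis to a coordinate-wise statement: along every admissible permutation $\sigma$, no single $\alpha_i \cap \beta_{\sigma(i)}$ contains points of mixed sign. I do not anticipate a serious obstacle; once one exploits the product structure of $\T_\alpha \cap \T_\beta$ to vary a single coordinate at a time, the rest is bookkeeping. The one subtlety is verifying that the ``swap'' partner $\x'$ really lies in $\SS_\HH$, but this follows because the remaining factors $\alpha_i \cap \beta_{\sigma(i)}$ are nonempty for the permutation $\sigma$ under consideration, so their coordinates can be left unchanged.
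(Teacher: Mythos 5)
Your proposal is correct and follows essentially the same route as the paper: deduce from $\rank(\CF(\HH)) = \abs{\chi(\CF(\HH))}$ that all local intersection numbers $\eta(\x)$ agree, then verify conditions (1)--(3) via the nonemptiness of $\SS_\HH$, the single-coordinate swap argument, and equation \eqref{eq:grading}. Your write-up is somewhat more explicit about the swap construction and why the partner generator lies in $\SS_\HH$, but the substance is identical.
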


\begin{proof}
If $\rank( \CF(\HH)) = \chi(\CF(\HH)) = \abs{H_1(Y;\Z)}$, then $\CF(\HH)$ is
supported in a single grading, so $\eta(\x) = 1$ for all $\x \in \T_\alpha \cap
\T_\beta$. The result then follows quickly from equations \eqref{eq:epsilonij},
\eqref{eq:relation}, and \eqref{eq:grading}. Specifically, since $\SS_\HH \ne
\emptyset$, there exists $\sigma_0 \in S_g$ such that $\alpha_i \cap
\beta_{\sigma_0(i)} \ne \emptyset$ for each $i$, and hence $\epsilon_{i,
\sigma_0(i)} \ne 0$. If $\alpha_i$ and $\beta_j$ contain a point $x$ that is
part of some $\x \in \SS_\HH$, then every other point $x' \in \alpha_i \cap
\beta_j$ has $\eta(x') = \eta(x)$, and hence $\epsilon_{i,j} = \eta(\x) \in
\{+, -\}$. Finally, if $\x = (x_1, \dots, x_g)$ and $\x' = (x'_1, \dots,
x'_g)$, with $x_i \in \alpha_i \cap \beta_{\sigma(i)}$ and $x'_i \in \alpha_i
\cap \beta_{\sigma'(i)}$, then equation \eqref{eq:grading} and the fact that
$\eta(\x) = \eta(\x')$ imply the final hypothesis.
\end{proof}

Finally, to prove Theorem \ref{thm:S3}, we use a simple graph-theoretic
argument. Given a Heegaard diagram $\HH$, let $\Gamma_\HH$ denote the bipartite
graph with vertex sets $\AA = \{A_1, \dots, A_g\}$ and $\BB = \{B_1, \dots,
B_g\}$, with an edge connecting $A_i$ and $B_j$ for each intersection point in
$\alpha_i \cap \beta_j$. The set $\SS_\HH$ thus corresponds to the set of
perfect matchings on $\Gamma_\HH$.

\begin{lemma} \label{lemma:destab}
If $\HH$ is a Heegaard diagram of genus $g>1$, and $\Gamma_\HH$ contains a leaf
(a $1$-valent vertex), then $Y_\HH$ admits a Heegaard diagram $\HH'$ of genus
$g-1$ with a bijection between $\SS_\HH$ and $\SS_{\HH'}$.
\end{lemma}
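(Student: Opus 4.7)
The plan is to destabilize the Heegaard diagram $\HH$ at the canceling pair of handles identified by the leaf. Since $\Gamma_\HH$ is bipartite, by symmetry I may assume the leaf is a vertex $A_i \in \AA$ with unique neighbor $B_j \in \BB$, so that $\alpha_i \cap \beta_j = \{p\}$ and $\alpha_i$ is disjoint from every $\beta_k$ with $k \ne j$.

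To construct $\HH'$, I would choose a small annular neighborhood $N \subset \Sigma$ of $\alpha_i$, taken disjoint from every $\alpha_k$ ($k \ne i$) and every $\beta_k$ ($k \ne j$), and meeting $\beta_j$ in a single arc through $p$; this is possible precisely because of the leaf hypothesis. Let $\Sigma'$ be the closed genus $g-1$ surface obtained from $\Sigma \minus \operatorname{int}(N)$ by capping off its two boundary circles with disks, and set $\HH' = (\Sigma', \{\alpha_k\}_{k \ne i}, \{\beta_k\}_{k \ne j})$, viewing each remaining curve inside $\Sigma \minus \operatorname{int}(N) \subset \Sigma'$.

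The next step is to show that $\HH'$ is a Heegaard diagram for $Y$ via the standard Morse-theoretic destabilization: the condition $\abs{\alpha_i \cap \beta_j} = 1$ says exactly that the index-$1$ critical point associated with $\alpha_i$ and the index-$2$ critical point associated with $\beta_j$ form a canceling pair, whose cancellation produces a Morse function on $Y$ whose middle level surface, together with its belt and attaching circles, is $\HH'$. The leaf hypothesis ensures that no other attaching circle passes over the cancelled $1$-handle, so the remaining $\alpha_k$'s and $\beta_k$'s appear on $\Sigma'$ as the same curves they were on $\Sigma$. For the bijection, I would observe that, because the surgery is confined to $N$, the intersections among the retained curves are unchanged, so $\Gamma_{\HH'}$ is obtained from $\Gamma_\HH$ simply by deleting the vertices $A_i, B_j$ together with their incident edges. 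Any perfect matching of $\Gamma_\HH$ must pair $A_i$ with $B_j$ at $p$, so the map $\x \mapsto (x_1, \dots, \widehat{x_i}, \dots, x_g)$ is a bijection $\SS_\HH \to \SS_{\HH'}$ whose inverse reinserts the coordinate $p$.

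I expect the main obstacle to be the verification in the previous paragraph. The subtlety is that the leaf hypothesis controls only $A_i$, not $B_j$: the curve $\beta_j$ may cross arbitrarily many other $\alpha_k$'s, so $\HH$ is not in any obvious sense a stabilization of $\HH'$, and one cannot first handleslide $\beta_j$ clear of the other $\alpha_k$'s without risking a change in the generator count and thereby destroying the bijection. The cancellation must therefore be performed directly on $\HH$, relying on the fact that the extra intersections of $\beta_j$ with the other $\alpha_k$'s lie outside $N$ and are simply discarded when $\beta_j$ is removed.
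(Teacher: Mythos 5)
Your proof is correct, but it reaches the destabilized diagram by a genuinely different route than the paper. The paper first performs handleslides of the curves $\alpha_k$ ($k \ne i$) over $\alpha_i$ so as to remove every intersection of $\beta_j$ with the $\alpha$ curves other than $\alpha_i$; because $\alpha_i$ is disjoint from all $\beta_\ell$ with $\ell \ne j$, these slides only affect points of $\alpha_k \cap \beta_j$, none of which can appear in any generator (every $\x \in \SS_\HH$ must use the unique point $p$ of $\alpha_i \cap \beta_j$, so no generator pairs $\beta_j$ with another $\alpha$ curve). After this clean-up the diagram is a literal stabilization and one destabilizes in the textbook sense. You instead perform the handle cancellation in a single step, compressing $\Sigma$ along $\alpha_i$ and discarding $\beta_j$ outright; the price is that you must argue directly that attaching a $2$-handle to the upper handlebody along $\alpha_i$ cancels the $1$-handle dual to the compressing disk of $\beta_j$ (this is exactly the standard cancellation criterion $\abs{\alpha_i \cap \beta_j} = 1$) and leaves a genus $g-1$ handlebody in which the remaining $\beta_k$ still form a complete system of attaching circles. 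Both arguments are sound, both leave the retained curves untouched, and both identify $\Gamma_{\HH'}$ with $\Gamma_\HH$ minus $A_i$ and $B_j$, yielding the same bijection on perfect matchings. One remark: your closing worry that one cannot first handleslide $\beta_j$ clear of the other $\alpha_k$ is well taken but moot for the paper's argument, which slides the $\alpha$ curves over $\alpha_i$ rather than sliding $\beta_j$; that choice is precisely what keeps the generating set intact.
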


\begin{proof}
If $A_i$ is $1$-valent, then the curve $\alpha_i$ intersects one $\beta$ curve,
say $\beta_j$, in a single point and is disjoint from the remaining $\beta$
curves. By a sequence of handleslides of the $\alpha$ curves, we may remove any
intersections of $\beta_j$ with any $\alpha$ curve other than $\alpha_i$,
without introducing or removing any intersection points. We may then
destabilize to obtain $\HH'$. Since every element of $\SS_\HH$ includes the
unique point of $\alpha_i \cap \beta_j$, we have a bijection between $\SS_\HH$
and $\SS_{\HH'}$. (Indeed, $\Gamma_\HH'$ is obtained from $\Gamma_\HH$ by
deleting $A_i$ and $B_j$, which does not change the number of perfect
matchings.) The case where $B_i$ is $1$-valent is analogous.
\end{proof}

\begin{proof}[Proof of Theorem \ref{thm:S3}]
Let $\HH$ be a strong Heegaard diagram for $Y$ whose genus $g$ is minimal among
all strong Heegaard diagrams for $Y$. Suppose, toward a contradiction, that
$g>1$. By Lemma \ref{lemma:destab}, $\Gamma_\HH$ has no leaves. By assumption,
$\Gamma_\HH$ has a single perfect matching $\mu$. We direct the edges of
$\Gamma_\HH$ by the following rule: an edge points from $\AA$ to $\BB$ if it is
included in $\mu$ and from $\BB$ to $\AA$ otherwise. Thus, every vertex in
$\AA$ has exactly one outgoing edge, and every vertex in $\BB$ has exactly one
incoming edge. We claim that $\Gamma_\HH$ contains a directed cycle $\sigma$.
To see this, let $\gamma$ be a maximal directed path in $\Gamma_\HH$ that
visits each vertex at most once, and let $v$ be the initial vertex of $\gamma$.
If $v \in \BB$, then there is a unique directed edge $e$ in $\Gamma_\HH$ from
some point $w \in \AA$ to $v$, and $e$ is not included in $\gamma$. Likewise,
if $v \in \AA$, then there is an edge $e$ not in $\gamma$ connecting $v$ and
some point $w \in \BB$ since $v$ is not a leaf, and $e$ is directed from $w$ to
$v$ since the only outgoing edge from $v$ is in $\gamma$. In either case, the
maximality of $\gamma$ implies that $w \in \gamma$, which means that $\gamma
\cup e$ contains a directed cycle. However, $(\mu \minus \sigma) \cup (\sigma
\minus \mu)$ is then another perfect matching for $\Gamma_\HH$.

Thus, the Heegaard diagram $\HH$ is a torus with a single $\alpha$ curve and a
single $\beta$ curve intersecting in a single point, which describes the
standard genus-1 Heegaard splitting of $S^3$.
\end{proof}

\bibliographystyle{amsplain}
\bibliography{bibliography}

\end{document}